\date{\scriptsize   Received: , Accepted: .}
\newtheorem{theorem}{Theorem}[section]
\newtheorem{lemma}[theorem]{Lemma}
\newtheorem{corollary}[theorem]{Corollary}
\theoremstyle{definition}
\newtheorem{definition}[theorem]{Definition}
\newtheorem{example}[theorem]{Example}
\theoremstyle{remark}
\newtheorem{examples}{Examples}[section]
\newtheorem{remark}[theorem]{Remark}
\numberwithin{equation}{section}
\begin{document}

 
\title[GROUP REPRESENTATIONS ON REFLEXSIVE SPACES]{GROUP REPRESENTATION ON REFLEXSIVE SPACES} 
 
\author[B.Khodsiani]{Bahram Khodsiani}
\address[B.Khodsiani]{Department of Mathematics, University of Isfahan, Isfahan.}
\email{${\texttt {b}}_{-}$khodsiani@sci.ui.ac.ir}

\author[A.Rejali]{Ali Rejali $^*$}
\address[A. Rejali]{Department of Mathematics, University of Isfahan, Isfahan.}
\email{rejali@sci.ui.ac.ir}

  \thanks{$^*$Corresponding author}
%
 
 \maketitle
%

\begin{abstract}
Algebras which admit representations on reflexive Banach spaces seem to be a good generalisation of Arens regular Banach algebras, and behave in a similar way to $C^*$-algebras and  Von Neumann  algebras. Such algebras  are called weakly almost periodic Banach algebras (or in abbreviated form  $\mathcal{WAP}$-algebras). In this paper, for weighted group convoltion measure algebra  we construct a representation on reflexsive space.\\
\textbf{Keywords:}  WAP-algebra, dual Banach
algebra, Arens regularity, weak almost
periodicity\\
\textbf{MSC(2010):} 43A10, 43A20, 46H15, 46H25
\end{abstract}
 \section{Introduction and Preliminaries}
The dual $A^*$ of a Banach algebra $A$ can be turned into a Banach $A-$module in a natural way, by setting
$$\langle  f\cdot a, b\rangle=\langle f ,ab\rangle\\\  {\rm and}\\\    \langle a\cdot f, b\rangle=\langle f, ba\rangle $$ for all $ a,b\in A, f\in A^*$.
A {\it dual Banach algebra} is a Banach algebra $A$ such that $A=(A_*)^*$, as a Banach space,
for some Banach space $A_*$, and such that $A_*$ is a closed $A-$submodule of $A^*;$ or equivalently,
the multiplication on $A$ is separately weak*-continuous.
A functional $f\in A^*$ is said to be {\it weakly almost periodic} if $\{f\cdot a: \|a\|\leq 1\}$
 is relatively weakly  compact in $A^*$. We denote by $\mathcal{WAP}(A)$ the set of all weakly
almost periodic elements   of $A^*.$ It is easy to verify that, $\mathcal{WAP}(A)$ is a (norm) closed subspace of $A^*$. As pointed out by Pym\cite{Pym}, $\lambda\in A^*$ is weakly almost periodic if and only if $\lim_m\lim_n\langle a_mb_n,\lambda\rangle=\lim_n\lim_m\langle a_mb_n,\lambda\rangle$ whenever $(a_m)$ and $(b_n)$ are sequences in unit ball of $A$ and both repeated limits exist. For more about weakly almost periodic functionals, see \cite{DL}.
It is known that the multiplication of a Banach algebra $A$ has    two natural but, in general, different extensions (called Arens products) to the second dual $A^{**}$ each turning $A^{**}$  into a Banach algebra. When these extensions are equal, $A$ is said to be (Arens) regular. It can be verified that $A$ is Arens regular if and only if $\mathcal{WAP}(A)=A^*$. Further  information  for the Arens regularity of Banach algebras can be found in \cite{D, DL}.
If $A$ and $B$ are  Banach  algebras, the linear operator $\phi:A\rightarrow B$ is said to be  bounded below if $\inf \{||\phi(a)||:||a||\geq 1\}>0$.
$\mathcal{WAP}$-algebras, as a generalization of the Arens regular algebras, has been introduced and extensively studied in \cite{Da2}. Indeed, a Banach algebra $A$ for which the natural embedding of $A$ into $\mathcal{WAP}(A)^*$ is bounded below, is called a  {\it $\mathcal{WAP}$-algebra}. It has also known that $A$ is a $\mathcal{WAP}$-algebra if and only if it admits an isomorphic representation on a reflexive Banach space. If $A$ is a $\mathcal{WAP}$-algebra, then $\mathcal{WAP}(A)$ separates the points of $A$ and  so $\mathcal{WAP}(A)$ is $\omega^*$-dense in $A^*$.
It can be readily verified that every dual Banach algebra, and every Arens regular Banach algebra, is a $\mathcal{WAP}$-algebra for comparison see \cite{Bkh}. Moreover group algebras are also always $\mathcal{WAP}$-algebras, however,  they are neither dual Banach algebras, nor Arens regular in the case where the underlying group is not discrete, see \cite[Corollary3.7]{B} and \cite{y1,Bkh}.

The paper is organized as follows.  In section 2, we construct a representation on reflexsive space.
\section{ Group  Measure Algebras}

 Let $G$ be a locally compact group with left Haar measure $\lambda$.  A Borel measurable function $\omega\geq1$ on  $G$ is called a weight or weight function if $\omega(xy)\leq\omega(x)\omega(y)$ for all $x,y\in G$.
  Throughout  this section, let $1\leq  p<\infty$ be a real number and  $q$ is such that $1/p+1/q=1$, then   $q$ is  called the exponential conjugate of $p$.  The functions $f:G\rightarrow \mathbb{C}$ such that   $f\omega\in L^p(G)$  form a linear space which is  denoted by  $L^p(G,\omega)$.   Then   $||f||_{p,\omega}=||f\omega||_p$ defines a norm on  $L^p(G,\omega)$. The dual space of $L^1(G,\omega)$ denoted by $L^\infty(G,1/\omega)$.  It consists of all complex-valued measurable functions  $g$ on  $G$ such that  $g/\omega\in L^\infty(G)$.  We equipped $L^\infty(G,1/\omega)$ with  the norm $||g||_{\infty, \omega}=||g/\omega||_\infty$. Then   $$C_0(G,1/\omega)=\{f:G\rightarrow \mathbb{C}:f/\omega\in C_0(G)\}$$  is a subspace of it.
  The dual space of     $L^p(G,\omega) $ is   $ L^q(G,1/\omega)$ consist of all measurable functions $g$ on $G$ such that   $g/\omega\in L^q(G)$ by duality
$$
\langle f,g\rangle:=\int_Gf(x)g(x)d\lambda (x)
$$
for all $f\in L^p(G,\omega)$ and  $g\in L^q(G,1/\omega)$.

For measurable functions $f$ and $g$ on $G$, the
convolution multiplication
$$
(f*g)(x)=\int_G f(y)\;g(y^{-1}x)\;d\lambda(y) \qquad (x\in G),
$$
is defined at each point $x\in G$ for which this makes sense;
i.e., the function  \linebreak $y\mapsto f(y)\;g(y^{-1}x)$ is
$\lambda$-integrable. Then $f*g$ is said to exist if $(f*g)(x)$
exists for almost all $x\in G$.

Since  $\omega\geq1$  hence $L^p(G,\omega)\subseteq L^p(G)$ and  $L^q(G)\subseteq L^q(G,1/\omega)$.

   The map
\[\gamma_p:L^1(G,\omega)\longrightarrow B(L^p(G,\omega))\]
is defined by $\gamma_p(a)(\xi)=a*\xi$ for all  $a\in L^1(G,\omega)$ and $\xi\in L^p(G,\omega)$.

Let $(\gamma_p)_*:L^p(G,\omega)\hat{\otimes} L^{q}(G,1/\omega)\longrightarrow L^\infty(G,1/\omega)$
 be such that
\begin{eqnarray*}
\langle (\gamma_p)_*(\xi\otimes \eta),a\rangle&=&\langle (\xi\otimes \eta),\gamma_p(a)\rangle=\langle \eta,\gamma_p(a)(\xi)\rangle\\
&=&\langle \eta,a*\xi\rangle=\int_G\int_G\eta(t)a(s)\xi(s^{-1}t)d\lambda( t)d\lambda (s)\\
&=&\langle \eta*\check{\xi},a\rangle
\end{eqnarray*}

such that  $ \eta*\check{\xi}(s)=\int_G\xi(s^{-1}t)\eta(t)d\lambda (t)$   and  $\check{\xi}(s)=\xi(s^{-1})$ for all  $s\in G$.
Then   $((\gamma_p)_*)^*=\gamma_p$.

\begin{lemma}
Let  $G$ be a locally compact group, and let $\omega$ be a weight on  $G$. Let   $1\leq  p,q<\infty $.
\begin{enumerate}
\item[(1)]
Every compactly supported function in $L^p(G)$  (respectively  $L^q(G)$) belongs to  $L^p(G,\omega)$ (respectively $L^q(G,1/\omega)$.
\item[(2)]
 $C_{00}(G)$ is dense  in $L^p(G,\omega)$ (respectively $L^q(G,1/\omega)$).
\end{enumerate}
\end{lemma}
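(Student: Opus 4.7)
The plan is to derive both assertions from the corresponding facts about the unweighted spaces $L^p(G)$ and $L^q(G)$, exploiting the inequality $\omega \ge 1$ together with the (standard) local boundedness of a submultiplicative Borel weight on a locally compact group.

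For part (1), the case of $L^q(G,1/\omega)$ is immediate: since $\omega \ge 1$, $|g/\omega| \le |g|$ pointwise, whence $L^q(G) \subseteq L^q(G,1/\omega)$ with no support hypothesis needed. For $L^p(G,\omega)$, if $f$ has compact support $K$ and $M := \operatorname{esssup}_K \omega < \infty$, then $\|f\|_{p,\omega} = \|f\omega\|_p \le M\|f\|_p < \infty$, so every compactly supported $L^p$-function lies in $L^p(G,\omega)$.

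For part (2) I would use the standard two-step reduction in each space. \emph{Step A (truncation).} Given $f \in L^p(G,\omega)$, set $f_n := f\,\chi_{K_n}\chi_{\{|f|\le n\}}$, where the compact sets $K_n$ exhaust the $\sigma$-finite set $\{f \ne 0\}$ (inner regularity of $\lambda$). Dominated convergence gives $f_n\omega \to f\omega$ in $L^p(G)$, hence $f_n \to f$ in $L^p(G,\omega)$, and each $f_n$ is bounded with compact support. The analogous truncation (cutting on $|g/\omega|$) handles $L^q(G,1/\omega)$. \emph{Step B (continuous approximation).} Approximate any bounded $f$ with compact support $K$ by $\phi \in C_{00}(G)$ via Lusin's theorem: choose $\phi$ supported in a fixed compact neighborhood $K'$ of $K$, with $\|\phi\|_\infty \le \|f\|_\infty$ and $\lambda(\{f \ne \phi\}) < \varepsilon$. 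Bounding $\omega \le N$ on $K'$ yields $\|(f-\phi)\omega\|_p^p \le (2\|f\|_\infty)^p N^p \varepsilon$, which is arbitrarily small; the $L^q(G,1/\omega)$ estimate is even cleaner because $|(f-\phi)/\omega| \le |f-\phi|$.

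The conceptual obstacle is that neither $\omega$ nor $1/\omega$ is assumed continuous, so one cannot simply multiply a continuous approximant in unweighted $L^p(G)$ by $1/\omega$ (or divide by $\omega$) to land back in $C_{00}(G)$. Step B sidesteps this by handling the weight through its essential sup on a fixed compact enlargement of $\operatorname{supp} f$, leaving the actual continuous approximation problem entirely inside the unweighted Lebesgue setting. Local boundedness of $\omega$ is the only property of the weight used beyond $\omega \ge 1$.
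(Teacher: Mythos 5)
Your proof is correct, and part (1) is essentially the paper's argument: both rest on the weight being bounded above on compact sets. For part (2) you take a genuinely different route. The paper first chooses $h\in C_{00}(G)$ with $\|h-f\omega\|_p$ small and then, to get around the possible discontinuity of $\omega$, approximates $\omega$ itself in $L^p(\operatorname{supp}h)$ by a continuous function $\eta\ge\delta$ and uses $g=h/\eta$ (extended by zero) as the continuous approximant of $f$, splitting the error into the terms $h/\eta-h/\omega$ and $h/\omega-f$. You instead truncate $f$ to a bounded, compactly supported function and invoke Lusin's theorem, controlling the weight only through its essential supremum on a fixed compact neighbourhood of the support. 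Both arguments use exactly the same input about the weight (boundedness away from $0$ and $\infty$ on compacta, the paper's citation of Kaniuth); yours outsources the analytic work to Lusin's theorem and thereby avoids the mildly delicate points in the paper's construction (that one can take the continuous approximant $\eta$ of $\omega$ with $\eta\ge\delta$, and that $h/\eta$ extended by zero is continuous, which uses that $h$ vanishes on $\partial(\operatorname{supp}h)$), at the cost of assuming Lusin's theorem for Haar measure. Two small points worth making explicit in your write-up: the Lusin approximant can be arranged to have support inside the fixed compact neighbourhood $K'$ by multiplying by a Urysohn function equal to $1$ on $K$, which does not enlarge $\{f\ne\phi\}$ because $f$ vanishes off $K$; and in the $L^q(G,1/\omega)$ case the truncated function is still bounded (since $|g|\le n\omega$ and $\omega$ is bounded on the compact support), which is what lets Lusin's theorem apply there as well.
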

\begin{proof}
(1)
By \cite[Lemma 1.3.3]{Kaniuth}, the weight  $\omega $  is bounded away from  both zero and infinity on   compact subsets of $G$. If $f\in L^p(G)$  with compact support $K=supp f$   then for all  $x\in K$, $\omega(x)<b$ for some $b>0$. Then   $||f||_{p,\omega}\leq b ||f||_p$. Hence $f\in L^p(G,\omega)$.

(2)
 By (1) $C_{00}(G)\subseteq L^p(G,\omega)$. To show that  $C_{00}(G)$  is dense in $L^p(G,\omega)$, let
  $f\in L^p(G,\omega)$ and $\epsilon>0$ be given. Since  $f\omega\in L^p(G)$, there exists  $h\in C_{00}(G)$ such that  $||h-f\omega||_p^p\leq \epsilon$.  Let $S$ denote the compact support of $h$ and observe that  $\omega(x)\geq\delta$ for some  $\delta>0$ and all  $x\in S$.  Since $\omega$ is bounded  on $S$, $\omega_{|_S}\in L^p(S)$ and hence there exist a continuous function $\eta:S\rightarrow \mathbb{R}$ such that $\eta(x)\geq \delta$ for all $x\in S$ and
\[\int_S|\eta(x)-\omega(x)|^pd\lambda(x)\leq \frac{\epsilon\delta ^p}{||h||_\infty^p}.\]
Now  define a function $g$ on $G$ by $g(x)=\frac{h(x)}{\eta(x)}$ for  $x\in S$ and   $g(x)=0$ for $x\not\in S$.  Since $ 1/\eta(x)\leq 1/\delta $  for all $x\in S$, it is easily verified that  $g$ is continuous on $G$. Thus $g\in C_{00}(G)$ and
\[||g-f||_{p,\omega}^p=\int_S\omega(x)^p|g(x)-f(x)|^pd\lambda(x)+\int_{G\setminus S}\omega(x)^p|f(x)|^pd\lambda(x),\]
We estimate the first integral on the right as follows:
\begin{eqnarray*}
\int_S \omega(x)^p|g(x)-f(x)|^pd\lambda(x)&\leq&\int_S\omega(x)^p|\frac{h(x)}{\eta(x)}-\frac{h(x)}{\omega(x)}|^p d\lambda(x)\\
&+&\int_S\omega(x)^p|\frac{h(x)}{\omega(x)}-f(x)|^pd\lambda(x)\\
&=&\int_S\frac{h(x)^p}{\eta(x)^p}|\omega(x)-\eta(x)|^pd\lambda(x)\\
&+&\int_S|h(x)-\omega(x)f(x)|^pd\lambda(x)\\
&\leq&\frac{||h||_\infty^p}{\delta^p}\int_S|\omega(x)-\eta(x)|^pd\lambda(x)\\
&+&\int_S|h(x)-\omega(x)f(x)|^pd\lambda(x)\\
&\leq&\epsilon+\int_S|h(x)-\omega(x)f(x)|^pd\lambda(x).
\end{eqnarray*}
It follows that
\begin{eqnarray*}
||g-f||_{p,\omega}^p&\leq&\epsilon+\int_S|h(x)-\omega(x)f(x)|^pd\lambda(x)+\int_{G\setminus S}\omega(x)^p|f(x)|^pd\lambda(x)\\
&=&\epsilon +\int_G|h(x)-\omega(x)f(x)|^pd\lambda(x)\leq 2\epsilon.
\end{eqnarray*}
This shows that $C_{00}(G)$ is dense in $L^p(G,\omega)$.

For $q$ is similar to $p$, but $g$ must be defined by  $g(x)=\eta(x)h(x)$.
\end{proof}
\begin{remark}
If $\alpha\leq\omega\leq \beta$, then  $L^p(G,\omega)=L^p(G)$ and the norms        $||.||_{p,\omega}$ and $||.||_p$ are equivalent and $C_{00}(G))$ is dense in  $L^p(G)$, hence dense in  $L^p(G,\omega)$.
\end{remark}
For   the function $\xi$ on $G$  and  $x\in G$ left and right translates $L_x\xi$ and $R_x\xi$ defined by $L_x\xi(y)=\xi(x^{-1}y)$ and $R_x\xi(y)=\xi(yx)$  for all  $y\in G$.
\begin{lemma}\label{approximate}
Let $G$  be a locally compact group, and let $\omega$ be a weight on  $G$,  $1< p<\infty $  and $\xi\in L^p(G,\omega)  $.
\begin{enumerate}
\item[(1)]for all $x\in G$, $L_x\xi\in L^p(G,\omega)$  and $||L_x\xi||_{p,\omega}\leq \omega(x)||\xi||_{p,\omega}$.
\item[(2)]
The map $x\rightarrow L_x$ from  $G$ into $L^p(G,\omega)$ is continuous.

\item[(3)] Let  $q$ be the exponential conjugate of $p$. If    $\omega$ is continuous on  $G$  and
  $\eta\in  L^{q}(G,1/\omega)$.   Then  $ \eta*\check{\xi}\in C_0(G,1/\omega)$.
\item[(4)]
For every relatively compact neighborhood $V$ of $e$ in $G$, let $u_V\in L^p(G,\omega)$ be such that  $u_V\geq0$ and  $||u_V||_{p,\omega}=1$ and  $u_V=0$ almost every where on $G\setminus V$. Then, given  $\xi\in L^p(G,\omega)$ and $\epsilon>0$,

  \[||u_V*\xi-\xi||_{p,\omega}<\epsilon\]
for all sufficiently small  $V$.
\end{enumerate}
\end{lemma}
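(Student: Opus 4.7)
The plan is to handle the four parts in sequence, with parts (1) and (2) providing the main technical input for (3) and (4).

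Part (1) reduces to a change of variables. I would compute
\[
\|L_x\xi\|_{p,\omega}^{p} = \int_G |\xi(x^{-1}y)|^{p}\omega(y)^{p}\,d\lambda(y),
\]
substitute $z = x^{-1}y$ (using left-invariance of $\lambda$), and bound $\omega(xz) \le \omega(x)\omega(z)$ to extract $\omega(x)^{p}$. For (2), applying (1) to the identity $L_x\xi - L_{x_0}\xi = L_{x_0}(L_{x_0^{-1}x}\xi - \xi)$ gives $\|L_x\xi - L_{x_0}\xi\|_{p,\omega} \le \omega(x_0)\|L_{x_0^{-1}x}\xi - \xi\|_{p,\omega}$, reducing the problem to continuity at $e$. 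Given $\varepsilon > 0$, I would pick $f \in C_{00}(G)$ with $\|\xi - f\|_{p,\omega} < \varepsilon$ via the preceding lemma and split
\[
\|L_x\xi - \xi\|_{p,\omega} \le \|L_x(\xi - f)\|_{p,\omega} + \|L_xf - f\|_{p,\omega} + \|f - \xi\|_{p,\omega}.
\]
The first term is $\le \omega(x)\varepsilon$, controlled because $\omega$ is locally bounded near $e$; the middle term is handled by uniform continuity of $f$, whose support forces the relevant integral to be taken over the compact set $V \cdot \operatorname{supp}(f)$, where $\omega$ is bounded.

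For (3), the crux is the pointwise bound
\[
\frac{|\eta \ast \check\xi(s)|}{\omega(s)} \le \|\eta\|_{q,1/\omega}\,\|\xi\|_{p,\omega}.
\]
I would prove it by factoring $|\eta(t)\xi(s^{-1}t)| = \bigl(|\eta(t)|/\omega(t)\bigr)\bigl(|\xi(s^{-1}t)|\omega(s^{-1}t)\bigr)\bigl(\omega(t)/\omega(s^{-1}t)\bigr)$, bounding the last factor by $\omega(s)$ via submultiplicativity (applied to $\omega(t) = \omega(s \cdot s^{-1}t)$), and then using H\"older's inequality together with left-invariance of $\lambda$ after the substitution $r = s^{-1}t$. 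Continuity of $\eta \ast \check\xi$ then follows from the identity $\eta \ast \check\xi(s) = \langle L_s\xi,\eta\rangle$ together with (2). For decay at infinity, approximate $\eta$ and $\xi$ by $\eta_n, \xi_n \in C_{00}(G)$; each $\eta_n \ast \check\xi_n$ is continuous with compact support contained in $\operatorname{supp}(\eta_n) \cdot \operatorname{supp}(\xi_n)^{-1}$, hence lies in $C_0(G, 1/\omega)$, and the pointwise bound turns the two norm convergences into uniform convergence in $\|\cdot\|_{\infty, 1/\omega}$, placing $\eta\ast\check\xi$ in the closed subspace $C_0(G, 1/\omega)$.

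For (4), under the intended approximate-identity normalization (so that $\xi(s) = \int u_V(y)\xi(s)\,d\lambda(y)$), one writes
\[
(u_V \ast \xi - \xi)(s) = \int_G u_V(y)\bigl(L_y\xi(s) - \xi(s)\bigr)\,d\lambda(y),
\]
and Minkowski's integral inequality yields $\|u_V \ast \xi - \xi\|_{p,\omega} \le \int u_V(y)\|L_y\xi - \xi\|_{p,\omega}\,d\lambda(y)$. Part (2) provides a neighborhood $W$ of $e$ on which $\|L_y\xi - \xi\|_{p,\omega} < \varepsilon$, so for $V \subseteq W$ the right-hand side is small. The main obstacle is (3), which requires orchestrating submultiplicativity, H\"older, and left-invariance into a single pointwise estimate and then invoking the density of $C_{00}(G)$ a second time to transfer decay at infinity from the $C_{00}$ approximants.
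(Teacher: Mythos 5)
Your proposal is correct and follows essentially the same route as the paper: change of variables plus submultiplicativity of $\omega$ for (1), approximation by $C_{00}(G)$ together with local boundedness of $\omega$ for (2), and for (3) the same H\"older bound, continuity via $s\mapsto L_s\xi$, and transfer of decay from $C_{00}$ approximants using the uniform estimate in $\|\cdot\|_{\infty,1/\omega}$. The only divergence is in (4), where you apply Minkowski's integral inequality directly to $\xi$ instead of first passing to a compactly supported approximant $g$ as the paper does --- a slightly cleaner variant --- and you rightly flag that the argument needs the normalization $\int_G u_V\,d\lambda=1$ rather than the stated $\|u_V\|_{p,\omega}=1$, an assumption the paper's own proof also uses implicitly.
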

\begin{proof}
(1)  follows simply from  submultiplicativity of    $\omega$ :
 \begin{eqnarray*}
||L_x\xi||_{p,\omega}^p&=&\int_G|\xi(x^{-1}t)|^p\omega(t)^pd\lambda(t)\\
&=&\int_G|\xi(x^{-1}t)|^p{\omega(x^{-1}t)^p}\frac{\omega(t)^p}{\omega(x^{-1}t)^p}d\lambda(t)\\
&\leq&\omega(x)^p\int_G|\xi(x^{-1}t)|^p\omega(x^{-1}t)^pd\lambda(t)\\
&=&\omega(x)^p||\xi||_{p,\omega}^p
\end{eqnarray*}
(2)Let  $\epsilon>0$.
Since  $C_{00}(G)$ is dense in  $L^p(G,\omega)$.  There exists $g\in C_{00}(G)$ such that
$||\xi-g||_{p,\omega}<\epsilon/3$.
Let  $x\in G$ and choose a compact neighbourhood  $V$ of $x$ in $G$. Let
\[C=\sup\{\omega(s):s\in V.supp g\}<\infty\]
 Then for $y\in V$,

 \begin{eqnarray*}
||L_xg-L_yg||_{p,\omega}^p&=&\int_G|g(x^{-1}t)-g(y^{-1}t)|^p\omega(t)^pd\lambda(t)\\
&\leq&C^p\int_{V.supp g}|g(x^{-1}t)-g(y^{-1}t)|^pd\lambda(t)\\
&=&C^p||L_xg-L_yg||_{p}^p
\end{eqnarray*}
 when $y\rightarrow x$ converges to zero.
Since  $\omega$ is  locally bounded :
 \begin{eqnarray*}
||L_x\xi-L_y\xi||_{p,\omega}&\leq & ||L_x(\xi-g)||_{p,\omega}+||L_xg-L_yg||_{p,\omega}+||L_y(\xi-g)||_{p,\omega}\\
&\leq&(\omega(x)+\omega(y))||\xi-g||_{p,\omega}+||L_xg-L_yg||_{p,\omega}<\epsilon
\end{eqnarray*}
 (3)
For $x\in G$, by H$\ddot{o}$lder inequality:
 \[\int_G|\xi(x^{-1}y)\eta(y)|d\lambda(y)=\int_G|L_{x}\xi(y)|.|\eta(y)|d\lambda(y)\leq||L_{x}\xi||_{p,\omega}||\eta||_{q,\omega}.\]
   So  $ \eta*\check{\xi} $ is defined everywhere and bounded on  $G$  by $\omega(x)||\xi||_{p,\omega}||\eta||_{q,\omega}$.  For $x,y\in G$, H$\ddot{o}$lder inequality gives
  \[| \eta*\check{\xi}(x)- \eta*\check{\xi}(y)|\leq ||L_{x}\xi-L_{y}\xi||_{p,\omega}||\eta||_{q,\omega}.\]
 The map $t\rightarrow L_t\xi$ from $G$ into  $L^p(G,\omega)$ is continuous and therefore we obtain that  $ \eta*\check{\xi} $ is continuous.
To prove $ \eta*\check{\xi}\in C_0(G,1/\omega)$, note first that $\xi,\eta\in C_{00}(G)$ whenever  $\eta*\check{\xi}\in C_{00}(G)$.

If  $\xi\in L^p(G,\omega) , \eta\in L^q(G,1/\omega)$, then for $1\leq r<\infty$, since  $C_{00}(G)$ is dense in  $L^r(G,\omega)$ and  $L^r(G,1/\omega)$, there exist  $(\xi_n)$ and $(\eta_n)$ in $C_{00}(G)$ such that $||\xi-\xi_n||_{p,\omega}\rightarrow 0$ and
$||\eta-\eta_n||_{q,\omega}\rightarrow 0$.  Then for all  $x\in G$,
\begin{eqnarray*}
|\eta*\check{\xi}(x)-\eta_n*\check{\xi}_n(x)|/{\omega(x)}&\leq&|\eta*(\check{\xi}-\check{\xi}_n)(x)|/{\omega(x)}+|(\eta-\eta_n)*\check{\xi}(x)|/{\omega(x)}\\
&\leq&||\xi-\xi_n||_{p,\omega}||\eta||_{q,\omega}+||\xi||_{p,\omega}||\eta-\eta_n||_{q,\omega},
\end{eqnarray*}
which tends  to $0$ as $n\rightarrow \infty$. It follows that $(\eta*\check{\xi})/\omega\in C_0(G)$.

(4)
Since  $C_{00}(G)$ is dense in $L^p(G,\omega)$, we can choose  $g\in C_{00}(G)$ such that
 $||\xi-g||_{p,\omega}<\frac{\epsilon}{3} $.  For  $g$ it follows that
\begin{eqnarray*}
||u_V*g-g||_{p,\omega}^p&=&\int_G|\int_Gu_V(xy)g(y^{-1})d\lambda(y)-g(x)|^p\omega(x)d\lambda(x)\\
&=&\int_G|\int_Gu_V(y)L_yg(x)d\lambda(y)-g(x)|^p\omega(x)d\lambda(x)\\
&=&\int_G|\int_Gu_V(y)[L_yg(x)-g(x)]d\lambda(y)|^p\omega(x)d\lambda(x)\\
&\leq&\int_G\left (\int_Gu_V(y)|L_yg(x)-g(x)|d\lambda(y)\right )^p\omega(x)d\lambda(x)\\
&\leq&\lambda(V.supp g).\sup \{||L_yg-g||_\infty^p:y\in V\}.
\end{eqnarray*}
 Now, since the map $y\rightarrow L_yg$ from  $G$ into  $L^p(G,\omega)$ is continuous, we find a neighbourhood $W$ of $e$ in  $G$ such that, for all $y\in W$,
\[||L_yg-g||_\infty\leq \frac{\epsilon}{3\lambda(V.supp g)^{1/p}}\]

Together with the above estimate we get for all $V\subseteq W$,
\begin{eqnarray*}
||u_V*\xi-\xi||_{p,\omega}&\leq&||u_V*(\xi-g)||_{p,\omega}+||u_V*g-g||_{p,\omega}+||g-\xi||_{p,\omega}\\
&\leq& (||u_V||_{p,\omega}+1)||\xi-g||_{p,\omega}+\frac{\epsilon}{3}<\epsilon
\end{eqnarray*}

\end{proof}

Now
\[\tilde {\Theta}: M_b(G,\omega)\longrightarrow B(L^p(G,\omega))\]
 extends  $\lambda_p$ to $M_b(G,\omega)$ by
 \[\langle\eta,\tilde{\Theta}(\mu)(\xi)\rangle=\langle\mu,\eta*\check{\xi}\rangle.\]

Then for $\xi,\eta\in C_{00}(G	 )$, we have
\[\langle\eta,\tilde{\Theta}(\mu)(\xi)\rangle=\langle\eta, \mu*\xi\rangle\]
 where $\mu*\xi(t)=\int_G\xi(s^{-1}t)d\mu(s)$.

The next theorem extends  the Young's construction \cite[Theorem4]{y1} for weighted convolution  measure algebras.
\begin{theorem}
 Let $G$  be a locally compact group, and let  $\omega\geq1$ be a continuous  weight on $G$.
Then weighted convolution measure  algebra  $M_b(G,\omega)$ is a $\mathcal{WAP}$-algebra.
\end{theorem}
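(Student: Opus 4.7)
The plan is to build a norming subspace of $\mathcal{WAP}(M_b(G,\omega))$ out of the representation $\tilde{\Theta}$, rather than to argue that $\tilde{\Theta}$ itself is bounded below; the latter need not hold even in elementary cases such as the left regular representation of $\ell^1(\mathbb{Z})$ on $\ell^2(\mathbb{Z})$. Since $1<p<\infty$ and $\xi\mapsto\xi\omega$ is an isometric isomorphism of $L^p(G,\omega)$ onto $L^p(G)$, the space $L^p(G,\omega)$ is reflexive, so $B(L^p(G,\omega))$ is a dual Banach algebra with predual $L^p(G,\omega)\hat{\otimes}L^q(G,1/\omega)$. The identity $(\mu*\nu)*\xi=\mu*(\nu*\xi)$ together with the weighted Young inequality $\|\mu*\xi\|_{p,\omega}\leq\|\mu\|_\omega\|\xi\|_{p,\omega}$ make $\tilde{\Theta}$ a bounded Banach algebra homomorphism. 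Two standard facts then apply: in any dual Banach algebra the predual lies inside $\mathcal{WAP}$, and continuous homomorphisms pull $\mathcal{WAP}$-functionals back to $\mathcal{WAP}$-functionals (as continuous maps carry relatively weakly compact sets to relatively weakly compact sets). The defining identity $\langle\eta,\tilde{\Theta}(\mu)\xi\rangle=\langle\mu,\eta*\check{\xi}\rangle$ says that the pullback along $\tilde{\Theta}$ of the predual tensor $\xi\otimes\eta$ is the functional $\mu\mapsto\langle\mu,\eta*\check{\xi}\rangle$, which by Lemma~\ref{approximate}(3) is represented by the element $\eta*\check{\xi}\in C_0(G,1/\omega)\subseteq M_b(G,\omega)^*$. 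Hence every such $\eta*\check{\xi}$ belongs to $\mathcal{WAP}(M_b(G,\omega))$.

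The central step is to show that the linear span of $\{\eta*\check{\xi}:\xi\in L^p(G,\omega),\,\eta\in L^q(G,1/\omega)\}$ is $\|\cdot\|_{\infty,1/\omega}$-dense in $C_0(G,1/\omega)$. Because $f\mapsto f/\omega$ is an isometric isomorphism $C_0(G,1/\omega)\to C_0(G)$ and $C_{00}(G)$ is dense in $C_0(G)$, it suffices to approximate each $f\in C_{00}(G)$. For such $f$, pick a small relatively compact neighbourhood $V$ of $e$ and a nonnegative $u_V\in C_{00}(G)$ supported in $V$ with $\int u_V\,d\lambda=1$; then $u_V\in L^p(G,\omega)$ and $f\in L^q(G,1/\omega)$. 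The change of variable $s=x^{-1}t$ gives $(f*\check{u_V})(x)=\int u_V(s)f(xs)\,d\lambda(s)$, a weighted average of $f$ on $xV$, and uniform continuity of $f$ forces $f*\check{u_V}\to f$ uniformly on $G$ as $V$ shrinks; since $\omega\geq1$, uniform convergence entails $\|\cdot\|_{\infty,1/\omega}$-convergence. This is exactly the approximate-identity mechanism underlying Lemma~\ref{approximate}(4). Combined with the first paragraph, $C_0(G,1/\omega)\subseteq\mathcal{WAP}(M_b(G,\omega))$.

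Finally, continuity of $\omega$ identifies $M_b(G,\omega)$ with $C_0(G,1/\omega)^*$ as a Banach space, so the unit ball of $C_0(G,1/\omega)$ norms $M_b(G,\omega)$. Together with the inclusion $C_0(G,1/\omega)\subseteq\mathcal{WAP}(M_b(G,\omega))$ just established, this forces $\mathcal{WAP}(M_b(G,\omega))$ to be norming, so the natural embedding $M_b(G,\omega)\hookrightarrow\mathcal{WAP}(M_b(G,\omega))^*$ is isometric and in particular bounded below; by the characterisation recalled in the introduction, this is exactly the statement that $M_b(G,\omega)$ is a $\mathcal{WAP}$-algebra. The main obstacle is the density step of the second paragraph: the approximate identity $u_V$ has to live simultaneously in $L^p(G,\omega)$ (so that $f*\check{u_V}$ fits the framework of $\eta*\check{\xi}$) and be normalised as a probability density (so that the uniform-convergence argument goes through), which is possible because local boundedness of the continuous weight $\omega$ places every compactly supported nonnegative function in every $L^r(G,\omega)$.
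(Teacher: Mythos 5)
Your proof is correct, but it takes a genuinely different route from the paper. The paper follows Young's construction directly: it fixes a sequence $p_n\to 1$, forms the reflexive $\ell^2$-direct sum $E=\bigoplus_2 L^{p_n}(G,\omega)$, and shows the resulting representation $\Theta$ is bounded below by exploiting $\|\xi\|_{p,\omega}\to\|\xi\|_{1,\omega}$ and $\|\eta\|_{q,\omega}\to\|\eta\|_{\infty,\omega}$ on $C_{00}(G)$ together with the approximate identity of Lemma~\ref{approximate}(4). You instead fix a single $p\in(1,\infty)$, concede (rightly) that $\gamma_p$ alone is not bounded below, and shift the burden onto the functionals: the predual of the dual Banach algebra $B(L^p(G,\omega))$ lies in its $\mathcal{WAP}$, weakly almost periodic functionals pull back along the bounded homomorphism $\tilde{\Theta}$, so every coefficient functional $\eta*\check{\xi}$ lies in $\mathcal{WAP}(M_b(G,\omega))$; by Lemma~\ref{approximate}(3) these live in $C_0(G,1/\omega)$, their span is dense there by the approximate-identity computation, and since $\mathcal{WAP}$ is norm-closed and $M_b(G,\omega)=C_0(G,1/\omega)^*$, the space $\mathcal{WAP}(M_b(G,\omega))$ is norming. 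Each step you invoke is standard and checks out (in particular $\tilde{\Theta}^*(f)\cdot\mu=\tilde{\Theta}^*(f\cdot\tilde{\Theta}(\mu))$ makes the pullback argument work, and uniform convergence dominates $\|\cdot\|_{\infty,1/\omega}$-convergence because $\omega\geq 1$). What the two approaches buy: the paper's argument produces an explicit, essentially isometric representation on a concrete reflexive space, which is the stronger structural statement; yours avoids the somewhat delicate limiting-norm estimates at the end of the paper's proof and yields the sharper intermediate fact $C_0(G,1/\omega)\subseteq\mathcal{WAP}(M_b(G,\omega))$, which dovetails with the $C_0(S)\subseteq wap(S)$ criterion of Corollary~\ref{homomo}. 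Both proofs ultimately rest on the same two inputs, Lemma~\ref{approximate} and the duality $M_b(G,\omega)=C_0(G,1/\omega)^*$.
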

\begin{proof}

Let $\{p_n\}$ be some sequence in $(1,\infty)$ such that  $p_n\longrightarrow1$. Let $E$ be the direct sum, in an  $\ell_2$- sense, of the spaces  $ L^{p_n}(G,\omega) $.  To be exact,
 \[E=\{\{\xi_n\}:\xi_n\in L^{p_n}(G,\omega), ||\{\xi_n\}||_E<\infty\}\]
  with $||\{\xi_n\}||_E=(\sum_{n=1}^\infty ||\xi_n||^2_{p_n,\omega})^{\frac{1}{2}}$, then  $E$  is reflexive.

The mapping  $\Theta:M_b(G,\omega)\longrightarrow B(E)$  is defined by  $\Theta(\mu)(\{\xi_n\})=\{\mu*\xi_n\}$.
Consider the adjoint map $\Theta_*:E\hat{\otimes}E^*\longrightarrow M_b(G,\omega)^*$  given by
\begin{eqnarray*}
\langle\Theta_*(\xi\otimes \eta),\mu\rangle&=&\langle\eta,\Theta(\mu)(\xi)\rangle\\
&=&\sum_{n=1}^\infty\langle\eta_n,\tilde{\Theta}(\mu)(\xi_n)\rangle\\
&=&\sum_{n=1}^\infty\langle\mu,\eta_n*\check{\xi}_n\rangle
\end{eqnarray*}
 where $\xi=\{\xi_n\}\in E, \eta=\{\eta_n\}\in E^*$ and $\mu\in M_b(G,\omega)$.

  In particular, $\Theta_*$ maps into $C_0(G,1/\omega)$. So that $\Theta$ is  $\omega^*$- $\omega^*$ continuous.

 For  $\xi,\eta\in  C_{00}(G)$, we have that
 $$\lim_{p\longrightarrow 1}||\xi||_{p,\omega}=||\xi||_{1,\omega}\quad , \quad \lim_{q\longrightarrow\infty }||\eta||_{q,\omega}=||\eta||_{\infty,\omega}.$$
  By lemma \ref{approximate}  for any $\eta\in C_{00}(G)$ and  $\varepsilon>0$ we can find some    $u_V\in C_{00}(G)$ with   $||u_V||_{1,\omega}=1$  and  $|| \eta*\check{u}_V-\eta||_{\infty,\omega}<\varepsilon$.
As  $p_n\longrightarrow 1$, we can find $n\in \mathbb{N}$ with $||u_V||_{p_n,\omega}<1+\varepsilon$
 and $||\eta||_{q_n,\omega}<(1+\varepsilon)||\eta||_{\infty,\omega}$.
It follows that
 \[||\eta*\check{u}_V||_{\infty,\omega}\leq ||u_V||_{p_n,\omega}.||\eta||_{q_n,\omega}<(1+\varepsilon)^2||\eta||_{\infty,\omega}\]
and that  \[|\langle \eta,\Theta(\mu)(u_V)\rangle|=|\langle\mu,\eta*\check{u}_V\rangle|\geq|\langle\mu,\eta\rangle| -\varepsilon||\mu||\]
By taking suitable supremums, it now follows:
\begin{eqnarray*}
(1-\epsilon)||\mu||&=&\sup\{|\langle \mu,\eta\rangle|: \eta\in C_{00}(G), ||\eta||\leq1\}-\epsilon||\mu||\\
&\leq &\sup\{|\langle \eta,\Theta(\mu)(u_V)\rangle|:\eta\in C_{00}(G), ||\eta||\leq1\}\\
&=&||\Theta(\mu)(u_V)||\leq ||\Theta||||\mu||
\end{eqnarray*}
 Since $\epsilon$ is arbitrary  $\Theta$ is isometric.
 \end{proof}

Let $G$ be a locally compact group, and let $\omega$ be a Borel-measurable weight function on it. Then the Fourier-Stieltjes algebra $B(G)$ and the  weighted measure algebra $M_b(G,\omega)$ are  dual Banach algebras(see \cite{Bkh}), also the Fourier algebra $A(G)$ and $L^1(G,\omega)$ are   closed ideals  in them respectively. Hence all are $\mathcal{WAP}$-algebras.

The next example shows that  Young's construction  can not work for semigroups.

\begin{examples}
Let $S=(\mathbb{N}, \min)$. Then  $\ell_1(S)$ is $\mathcal{WAP}$-algebras. But we can't apply Young's construction for this semigroup. Let  $f(n)=\frac{1}{n^2}$  and $g(n)=\frac{1}{\sqrt[3]{n}}$ defined for all $n\in \mathbb{N}$.

\[||f||_1=\sum_{n\in\mathbb{N}}\frac{1}{n^2}<\infty,||g||_4=\sum_{n\in \mathbb{N}}\frac{1}{n^{\frac{4}{3}}}<\infty\]
Then  $f\in \ell_1(S)$ and  $g\in \ell_4(S)$ but  $f*g\not\in \ell_4(S)$ since
\[f*g(k)=\sum _{n.m=k}f(n)g(m)=\sum _{m=k}\frac{1}{k^2} \frac{1}{\sqrt[3]{m}}+\sum _{n=k}\frac{1}{n^2} \frac{1}{\sqrt[3]{k}}=\infty\]
for all  $k\in \mathbb{N}$.

\end{examples}

\section{Relation between $C_0(S)$ and $\mathcal{WAP}$-algebra}

  Let $S=\mathbb{N}$. Then for $S$ equipped with $\min$ multiplication,  the semigroup algebra $\ell_1(S)$  is a WAP-algebra  but is not neither  Arens regular nor a dual Banach algebra. While, if we replace the $\min$ multiplication  with $\max$ then $\ell_1(S)$ is a dual Banach algebra (so a WAP-algebra) which is not Arens regular. If we change the multiplication of $S$ to the zero multiplication then the resulted semigroup algebra is Arens regular (so a WAP-algebra) which is not a dual Banach algebra. This describes the  interrelation  between the concepts of  being Arens regular  algebra, dual Banach algebra and  WAP-algebra.

\begin{definition}{\rm Let $X$, $Y$ be sets and $f$ be a complex-valued function on $X\times Y$.
\begin{enumerate}
  \item We say that $f$ is a cluster on $X\times Y$ if for each pair of sequences $(x_n)$, $(y_m)$ of distinct elements of $X,Y$, respectively
\begin{equation}\label{fg}
  \lim_n\lim_mf(x_n,y_m)=\lim_m\lim_nf(x_n,y_m)
\end{equation}
whenever both sides of (\ref{fg}) exist.
  \item If $f$ is cluster and both sides of \ref{fg} are zero (respectively positive) in all cases, we say that $f$ is $0$-cluster(respectively positive cluster).
\end{enumerate}

}\end{definition}
In general $\{f\omega:f\in wap(S)\}\not= wap(S,1/\omega)$.
By using \cite[Lemma1.4]{BR} the following is immediate.
\begin{lemma}\label{ghjk}{\rm
Let $\Omega(x,y)=\frac{\omega(xy)}{\omega(x)\omega(y)}$, for $x,y\in S$. Then
\begin{enumerate}
  \item If $\Omega$ is cluster, then $\{f\omega:f\in wap(S)\}\subseteq wap(S,1/\omega)$;
  \item If $\Omega$ is positive cluster, then $wap(S,1/\omega)=\{f\omega:f\in wap(S)\}$.

\end{enumerate}
}\end{lemma}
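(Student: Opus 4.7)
The plan is to unwind the definition of $wap(S,1/\omega)$: a function $\phi\in\ell^\infty(S,1/\omega)$ lies in $wap(S,1/\omega)$ if and only if the two-variable function
\[
F_\phi(x,y) := \frac{\phi(xy)}{\omega(x)\omega(y)} = \left(\frac{\phi}{\omega}\right)(xy)\,\Omega(x,y)
\]
is cluster on $S\times S$; analogously, $f\in wap(S)$ iff $f\in\ell^\infty(S)$ and $(x,y)\mapsto f(xy)$ is cluster. This reformulation is exactly the content (or an immediate consequence) of \cite[Lemma1.4]{BR}, and it turns both parts into assertions about cluster functions on $S\times S$.

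For part (1), given $f\in wap(S)$ set $g=f\omega$, so $g/\omega=f\in\ell^\infty(S)$. The associated two-variable function is $F_g(x,y)=f(xy)\,\Omega(x,y)$, the pointwise product of the bounded cluster function $(x,y)\mapsto f(xy)$ with the bounded cluster function $\Omega$ (note $0<\Omega\le 1$ by submultiplicativity of $\omega$). To show this product is cluster, I would take sequences $(x_n),(y_m)$ of distinct elements for which both iterated limits of $F_g$ exist and apply a standard diagonal extraction: inside the compact range $[-\|f\|_\infty,\|f\|_\infty]\times[0,1]$ one can pass to subsequences along which both iterated limits of $f(x_ny_m)$ and both iterated limits of $\Omega(x_n,y_m)$ exist; on those subsequences each factor's two iterated limits coincide by hypothesis, so the same holds for the product; since the original iterated limits of $F_g$ were assumed to exist, they must equal these extracted common values. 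Hence $F_g$ is cluster and $g\in wap(S,1/\omega)$.

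For part (2), the inclusion $\{f\omega:f\in wap(S)\}\subseteq wap(S,1/\omega)$ is (1). For the reverse, take $g\in wap(S,1/\omega)$ and put $f:=g/\omega\in\ell^\infty(S)$. Given sequences $(x_n),(y_m)$ of distinct elements for which both iterated limits of $f(x_ny_m)$ exist, extract subsequences on which both iterated limits of $\Omega(x_n,y_m)$ and both of $F_g(x_n,y_m)$ also exist. Because $\Omega$ is positive cluster, both iterated limits of $\Omega$ equal a common value $\alpha>0$; because $g\in wap(S,1/\omega)$, both iterated limits of $F_g$ equal a common value $\beta$. Writing $f(x_ny_m)=F_g(x_n,y_m)/\Omega(x_n,y_m)$ and passing to iterated limits, each iterated limit of $f(x_ny_m)$ equals $\beta/\alpha$, so they agree; since the originally assumed iterated limits must match these subsequential values, $f\in wap(S)$ and $g=f\omega$ lies in $\{f\omega:f\in wap(S)\}$.

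The only real obstacle is the subsequence extraction that lets cluster behavior pass through bounded products in (1) and through division by a positively bounded-below cluster function in (2); boundedness $\Omega\le 1$ handles (1), while strict positivity of the iterated limits of $\Omega$ in (2) is precisely what makes the division by $\alpha$ legitimate, which is why the stronger \emph{positive cluster} hypothesis is needed for the equality rather than just the inclusion.
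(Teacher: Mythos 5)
Your proposal is correct and follows essentially the same route as the paper: the paper gives no argument beyond citing \cite[Lemma 1.4]{BR}, which is precisely the fact you prove by hand, namely that multiplying a bounded cluster function by the bounded cluster function $\Omega$ (resp.\ dividing by a positive-cluster $\Omega$) preserves the cluster property, combined with the identification of $wap(S,1/\omega)$ via the double-limit criterion applied to normalized point masses. Your double subsequence extraction and the use of $0<\Omega\le 1$ (from submultiplicativity and $\omega\ge 1$) correctly supply the details the paper outsources to that citation.
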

It should be noted that if $M_b(S)$ is Arens regular (resp. dual Banach algebra) then $M_b(S,\omega)$ is so.  We don't know that if $M_b(S)$ is {\rm WAP}-algebra, then $M_b(S,\omega)$ is so. The following Lemma give a partial answer to this question.
\begin{corollary}\label{weighted} {\rm Let $S$ be  a locally compact  topological semigroup with a Borel measurable weight function   $\omega$ such that $\Omega$ is cluster on $S\times S$.
 \begin{enumerate}
 \item  If $M_b(S)$ is a {\rm WAP}-algebra, then so is $M_b(S,\omega)$;
 \item If $\ell_1(S)$ is a {\rm WAP}-algebra, then  so is $\ell_1(S,\omega)$.
 \end{enumerate}
}\end{corollary}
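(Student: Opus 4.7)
The plan is to use the bounded-below-embedding characterization of a $\mathcal{WAP}$-algebra: $A$ is $\mathcal{WAP}$ iff there exists $C > 0$ with $\|a\| \leq C\sup\{|\phi(a)|\colon \phi\in\mathcal{WAP}(A),\ \|\phi\|\leq 1\}$ for every $a\in A$. The key device is the map $\Psi\colon M_b(S,\omega)\to M_b(S)$, $\mu\mapsto\omega\mu$ (with $d(\omega\mu)=\omega\,d\mu$): since $\omega\geq 1$, one has $\|\omega\mu\|_{M_b(S)} = \int\omega\,d|\mu| = \|\mu\|_\omega$, so $\Psi$ is an isometric Banach-space embedding (though in general not an algebra homomorphism).

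For part (1), fix $\mu\in M_b(S,\omega)$ and let $C$ be the bounded-below constant of the natural map $M_b(S)\hookrightarrow\mathcal{WAP}(M_b(S))^*$. Applied to $\omega\mu\in M_b(S)$, this yields $\phi$ in the closed unit ball of $\mathcal{WAP}(M_b(S))$ with $|\phi(\omega\mu)|\geq\|\mu\|_\omega/C$. Put $\psi:=\phi\circ\Psi$, so $\|\psi\|\leq 1$ and $|\psi(\mu)|=|\phi(\omega\mu)|\geq\|\mu\|_\omega/C$. Once we know $\psi\in\mathcal{WAP}(M_b(S,\omega))$, the bounded-below property transfers to $M_b(S,\omega)$ with the same constant, proving (1); part (2) is the identical argument with $M_b(S)$ replaced by $\ell_1(S)$.

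It remains to show $\phi\circ\Psi\in\mathcal{WAP}(M_b(S,\omega))$. By Pym's criterion this reduces to checking that for any sequences $(\mu_m),(\nu_n)$ in the unit ball of $M_b(S,\omega)$, the iterated limits of $\phi(\omega(\mu_m*\nu_n))$ in $m,n$ commute whenever both exist. A direct computation gives
\[\int g\,d\bigl[\omega(\mu_m*\nu_n)\bigr] = \iint g(xy)\,\Omega(x,y)\,d(\omega\mu_m)(x)\,d(\omega\nu_n)(y),\]
so $\omega(\mu_m*\nu_n)$ differs from the $M_b(S)$-convolution $(\omega\mu_m)*(\omega\nu_n)$ only by insertion of the kernel $\Omega(x,y)$. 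The WAP property of $\phi$ controls the unweighted-convolution factor, and the cluster hypothesis on $\Omega$ controls the $\Omega$-factor; combining the two swaps is the functional-level content of Lemma \ref{ghjk}.

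The main obstacle is precisely this combination step: passing from the separate commutation of iterated limits of $\phi\bigl((\omega\mu_m)*(\omega\nu_n)\bigr)$ and of $\Omega(x_m,y_n)$ to a commutation for the intertwined product $\Omega(x,y)\cdot[\text{convolution pairing}]$ requires a Ramsey-type subsequence extraction so that each factor converges individually, after which the product-of-limits rule gives the required swap. When $\phi$ arises from integration against a $wap$-function on $S$ this collapses directly to Lemma \ref{ghjk}; for general $\phi$ the same scheme still applies, but one must track the double limits carefully through the integrals against $\mu_m$ and $\nu_n$.
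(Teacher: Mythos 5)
Your overall strategy differs from the paper's: the paper never touches general functionals in $\mathcal{WAP}(M_b(S))$. It argues entirely at the level of scalar functions on $S$: since $M_b(S)$ is a {\rm WAP}-algebra, $wap(S)$ separates the points of $S$; Lemma \ref{ghjk} gives $f\omega\in wap(S,1/\omega)$ for every $f\in wap(S)$, so $wap(S,1/\omega)$ separates points and the evaluation map $\epsilon\colon S\to\tilde X$ is injective, which (by the characterization of weighted measure algebras as {\rm WAP}-algebras from the authors' earlier work) yields the conclusion. Your route instead tries to transfer the bounded-below estimate directly through the isometry $\Psi\colon\mu\mapsto\omega\mu$, which would be a cleaner and more quantitative argument if it worked — but it has a genuine gap.

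The gap is the claim that $\phi\circ\Psi\in\mathcal{WAP}(M_b(S,\omega))$ for an \emph{arbitrary} $\phi$ in the unit ball of $\mathcal{WAP}(M_b(S))$; everything else in your argument is routine, and this claim is exactly what you do not prove. Your sketch for it fails at two points. First, a general $\phi\in M_b(S)^*$ is not given by integration against a function on $S$, so the displayed factorization of $\phi\bigl(\omega(\mu_m*\nu_n)\bigr)$ into a ``convolution pairing'' times an ``$\Omega$-factor'' has no meaning for such $\phi$; saying that ``the same scheme still applies'' is an assertion, not an argument. Second, even when $\phi=\phi_g$ for a bounded function $g$, the quantity $\iint g(xy)\,\Omega(x,y)\,d(\omega\mu_m)(x)\,d(\omega\nu_n)(y)$ is the integral of a \emph{pointwise product} against varying product measures; it is not a product of two scalar double sequences indexed by $(m,n)$, so the Ramsey/subsequence extraction followed by the product-of-limits rule has nothing to act on. That device is precisely what proves Lemma \ref{ghjk}, where the measures are point masses $\delta_{x_m},\delta_{y_n}$ and the integral genuinely collapses to $\Omega(x_m,y_n)\,g(x_my_n)$; the cluster hypothesis on $\Omega$ controls iterated limits along sequences of \emph{points}, not iterated limits of integrals against sequences of measures. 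To repair the proof along your lines you would either have to restrict to functionals of the form $\phi_f$ with $f\in wap(S)$ — at which point you are back to the paper's argument and must separately justify that this smaller family suffices (the paper does this via point separation and the external characterization, not via a norm estimate) — or prove the transfer claim for general $\phi$ by a genuinely new argument, for which the cluster hypothesis on $\Omega$ alone may not be enough.
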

\begin{proof}(1) Suppose that $M_b(S)$ is a {\rm WAP}-algebra so $wap(S)$ separates the points of $S$. By lemma\ref{ghjk} for every $f\in wap(S)$, $f\omega\in wap(S,1/\omega)$. Thus the evaluation map $\epsilon:S\longrightarrow \tilde {X}$ is   one to one.

  (2) follows from (1).
\end{proof}
\begin{corollary}\label{homomo} {\rm For a locally compact  semi-topological semigroup $S$,
  \begin{enumerate}
 \item  If $C_0(S)\subseteq wap(S)$, then  the measure algebra $M_b(S)$ is a {\rm WAP}-algebra.
\item  If  $S$ is discrete and $c_0(S)\subseteq wap(S)$, then $\ell_1(S)$ is a {\rm WAP}-algebra.
 \end{enumerate}
}\end{corollary}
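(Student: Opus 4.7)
The plan is to reduce both parts to showing that elements of $wap(S)$, viewed as bounded functions on $S$, induce weakly almost periodic functionals on the measure algebra, and then to exploit the fact that the norm on $M_b(S)$ (respectively $\ell_1(S)$) is recovered by pairing against $C_0(S)$ (respectively $c_0(S)$).

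First I would set up the identification. Every bounded Borel function $f$ on $S$ gives a functional $\Phi_f$ on $M_b(S)$ via $\langle \mu, \Phi_f\rangle = \int_S f\, d\mu$, and $\|\Phi_f\| = \|f\|_\infty$ when $f\in C_b(S)$. I would verify, using Pym's double-limit characterization of $\mathcal{WAP}(M_b(S))$ recalled in the Introduction, that $\Phi_f\in \mathcal{WAP}(M_b(S))$ whenever $f\in wap(S)$. For this, given sequences $(\mu_m),(\nu_n)$ in the unit ball of $M_b(S)$, the quantity $\langle \mu_m\nu_n,\Phi_f\rangle = \int\!\!\int f(xy)\,d\mu_m(x)\,d\nu_n(y)$ can be handled by approximating $\mu_m,\nu_n$ by finite combinations of point masses (using density/Riesz) and invoking the defining double-limit property of $f\in wap(S)$ on the resulting sequences of points in $S$; a routine $\varepsilon/3$ argument shows that the two iterated limits on the measure side agree whenever they exist.

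Next, granted that $C_0(S)\subseteq wap(S)$, the preceding step yields an isometric linear injection
\[
 j:C_0(S)\hookrightarrow \mathcal{WAP}(M_b(S)),\qquad f\mapsto \Phi_f.
\]
By the Riesz representation theorem, $M_b(S)=C_0(S)^*$ isometrically, so for every $\mu\in M_b(S)$,
\[
\|\mu\| \;=\; \sup\bigl\{|\langle \mu,f\rangle|\,:\,f\in C_0(S),\,\|f\|_\infty\le 1\bigr\}
\;\le\; \sup\bigl\{|\langle \mu,\Phi\rangle|\,:\,\Phi\in \mathcal{WAP}(M_b(S)),\,\|\Phi\|\le 1\bigr\}.
\]
Thus the canonical embedding $M_b(S)\hookrightarrow \mathcal{WAP}(M_b(S))^{*}$ is in fact isometric, hence certainly bounded below, so $M_b(S)$ is a $\mathcal{WAP}$-algebra by definition. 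Part (2) follows by the same argument with $S$ discrete, $\ell_1(S)$ in place of $M_b(S)$, and $c_0(S)=C_0(S)$ in place of $C_0(S)$; the duality $\ell_1(S)=c_0(S)^{*}$ supplies the same isometric lower bound.

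The main obstacle is the verification that $f\in wap(S)$ implies $\Phi_f\in \mathcal{WAP}(M_b(S))$; this is the bridge from a pointwise double-limit condition on $S$ to one against arbitrary bounded measures, and it is where the assumption that $S$ is a semi-topological semigroup (so that multiplication is separately continuous and point masses behave well under convolution) is actually used. Everything else is bookkeeping around the Riesz duality and the definition of a $\mathcal{WAP}$-algebra.
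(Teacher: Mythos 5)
Your route is genuinely different from the paper's. The paper's proof is a one-line appeal to \cite[Corollary 4.2.13]{BJM} to get that $\epsilon\colon S\to S^{wap}$ is injective, and then relies on the (externally proved, in \cite{Bkh}) characterization of $M_b(S)$ being a WAP-algebra in terms of $wap(S)$ separating the points of $S$. You instead verify the defining bounded-below condition directly, and your endgame is correct and even sharper: once the map $f\mapsto\Phi_f$ carries the closed unit ball of $C_0(S)$ into the closed unit ball of $\mathcal{WAP}(M_b(S))$, the Riesz duality $M_b(S)=C_0(S)^*$ forces the canonical map $M_b(S)\to\mathcal{WAP}(M_b(S))^*$ to be isometric, not merely bounded below; part (2) is then the discrete special case. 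This is more self-contained and quantitatively stronger than what the paper records.

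The gap is in the bridge lemma you yourself flag as the crux: that $f\in wap(S)$ implies $\Phi_f\in\mathcal{WAP}(M_b(S))$. Your proposed proof --- approximate $\mu_m,\nu_n$ by finite combinations of point masses and run an $\varepsilon/3$ argument through Pym's double-limit criterion --- does not work for non-discrete $S$: finitely supported measures are only weak-$*$ dense in $M_b(S)$, not norm dense, and the iterated limits $\lim_m\lim_n\int\!\!\int f(xy)\,d\mu_m(x)\,d\nu_n(y)$ are not stable under weak-$*$ approximation of the $\mu_m$ and $\nu_n$ (an $\varepsilon/3$ argument needs an error bound that is uniform in the other index, which weak-$*$ approximation does not supply). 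As written, the argument therefore only covers part (2), where finitely supported elements are norm dense in $\ell_1(S)$. The standard repair is Grothendieck's double-limit criterion rather than point masses: $f\in wap(S)$ means the translation orbit $\{y\mapsto f(xy):x\in S\}$ is relatively weakly compact in $C_b(S)$; by the Krein--\v{S}mulian theorem its closed absolutely convex hull is weakly compact, the function representing $\Phi_f\cdot\mu$ is $y\mapsto\int_S f(xy)\,d\mu(x)$, which for $\|\mu\|\le1$ lies in that hull, and composing with the weak--weak continuous embedding $C_b(S)\hookrightarrow M_b(S)^*$ shows $\{\Phi_f\cdot\mu:\|\mu\|\le1\}$ is relatively weakly compact, i.e. $\Phi_f\in\mathcal{WAP}(M_b(S))$. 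With that substitution your proof is complete and is a legitimate alternative to the paper's citation-based argument.
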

\begin{proof}(1)
By \cite[Corollary 4.2.13]{BJM} the map $\epsilon:S\longrightarrow S^{wap}$ is one to one, thus $M_b(S)$ is a {\rm WAP}-algebra.

(2) follows from (1).
\end{proof}

Dales, Lau and Strauss \cite[Theorem 4.6, Proposition 8.3]{DLS} showed that for a semigroup $S$, $\ell^1(S)$ is a dual Banach algebra with respect to $c_0(S)$ if and only if $S$ is weakly cancellative. If $S$ is  left or right weakly cancellative semigroup, then $\ell^1(S)$ is a {\rm WAP}-algebra. The next example shows that the converse is not true, in general.
\begin{example}
Let $S=(\mathbb{N},\min)$ then $wap(S)=c_0(S) \oplus \mathbb{C} $. So $\ell^1(S)$ is a {\rm WAP}-algebra but $S$ is neither left nor right weakly cancellative.
In fact, for $f\in wap(S)$  and all sequences $\{a_n\}$, $\{b_m\}$ with distinct element in $S$, we have $\lim_mf(b_m)=\lim_m\lim_nf(a_nb_m)=\lambda=\lim_n\lim_mf(a_nb_m)=\lim_nf(a_n)$, for some $\lambda\in \mathbb{C}$. This means $f-\lambda\in c_0(S)$ and $wap(S)\subseteq c_0(S) \oplus \mathbb{C} $. The other inclusion is clear.
\end{example}


 If $\{x_n\}$ and $\{y_m\}$ are sequences in $S$ we obtain an infinite matrix $\{x_ny_m\}$ which has $x_ny_m$ as its entry in the $m$th row and $n$th column.
 As in \cite{BR}, a matrix is said to be of row type $C$ ( resp. column type $C$) if the rows  ( resp. columns ) of the matrix are all constant and distinct. A matrix is of type $C$ if it is constant or of row or column type $C$.

J.W.Baker and A. Rejali in  \cite[Theorem 2.7(v)]{BR} showed that $\ell^1(S)$ is Arens regular if and only if for each pair of sequences $\{x_n\}$, $\{y_m\}$  with distinct elements in $S$ there is a submatrix  of $\{x_ny_m\}$ of type $C$.

A matrix $\{x_ny_m\}$  is said to be upper triangular constant if $x_ny_m=s$ if and only if $m\geq n$ and it is lower triangular constant if  $x_ny_m=s$ if and only if $m\leq n$. A matrix $\{x_ny_m\}$  is said to be  $W$-type if every submtrix of $\{x_ny_m\}$ is  neither upper triangular constant nor lower triangular constant.

\begin{theorem}{\rm
Let $S$ be a semigroup. The following statements are equivalent:
\begin{enumerate}
  \item $c_0(S)\subseteq wap(S)$.
  \item For each $s\in S$ and each pair $\{x_n\}$, $\{y_m\}$ of sequences in $S$,
  $$\{\chi_s(x_ny_m):n<m\}\cap \{\chi_s(x_ny_m):n>m\}\not=\emptyset;$$
\item For  each pair $\{x_n\}$, $\{y_m\}$ of sequences in $S$ with distinct elements, $\{x_ny_m\}$ is a  $W$-type matrix;

\item For every $s\in S$, every infinite set
    $B\subset S$ contains a finite subset $F$ such that $\cap\{sb^{-1}:b\in
    F\}\setminus (\cap\{sb^{-1}: b\in B\setminus F\})$ and $\cap\{b^{-1}s:b\in F\}\setminus (\cap\{b^{-1}s: b\in B\setminus F\})$ are finite.
\end{enumerate}
}\end{theorem}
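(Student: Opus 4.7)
My plan is to prove the cycle (1)$\Leftrightarrow$(2)$\Leftrightarrow$(3), and then tie (3) to (4) separately. The first three conditions are essentially the same fact in different dialects -- Pym's iterated-limit criterion for weak almost periodicity specialised to the $\{0,1\}$-valued functions $\chi_s$ -- and the real content sits in (3)$\Leftrightarrow$(4), where triangular structure in the product matrix must be matched against the stabilisation behaviour of the intersections $\bigcap sb^{-1}$ and $\bigcap b^{-1}s$.

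\textbf{(1)$\Leftrightarrow$(2)$\Leftrightarrow$(3).} Since $wap(S)$ is a norm-closed subspace of $\ell^\infty(S)$ and the linear span of $\{\chi_s:s\in S\}$ is norm-dense in $c_0(S)$, condition (1) is equivalent to $\chi_s\in wap(S)$ for every $s\in S$. By Pym's criterion this amounts to $\lim_n\lim_m\chi_s(x_n y_m)=\lim_m\lim_n\chi_s(x_n y_m)$ whenever both iterated limits exist. Being $\{0,1\}$-valued, disagreement of these limits forces, after a standard Ramsey/diagonal extraction on $n$ and $m$, an upper or lower triangular constant pattern on subsequences, and forbidding such a pattern is exactly (2). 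A further extraction to distinct elements recasts (2) as the assertion that no submatrix of $\{x_n y_m\}$ is upper or lower triangular constant, which is (3).

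\textbf{(3)$\Leftrightarrow$(4).} For (3)$\Rightarrow$(4) I argue by contrapositive. Suppose (3) fails via an upper triangular constant submatrix: distinct sequences $\{x_n\}$ and $\{y_m\}$ with $x_n y_m=s\Leftrightarrow m\ge n$. Take $B=\{x_n\}$. For any finite $F=\{x_{n_1},\dots,x_{n_k}\}\subseteq B$, we have $y_m\in\bigcap\{b^{-1}s:b\in F\}$ iff $m\ge\max_j n_j$, whereas $y_m\in\bigcap\{b^{-1}s:b\in B\setminus F\}$ would require $m\ge n$ for all indices $n$ of the infinite set $B\setminus F$, which is impossible. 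So the second difference in (4) contains all $y_m$ with $m\ge\max_j n_j$ and is infinite, violating (4); the lower triangular case is symmetric, using $B=\{y_m\}$ and the first (that is, $sb^{-1}$) intersection. Conversely, assume (4) fails: there exist $s\in S$ and an infinite $B\subseteq S$ such that for every finite $F\subseteq B$ at least one of the two differences is infinite. Enumerate $B=\{b_1,b_2,\dots\}$ and set $F_k=\{b_1,\dots,b_k\}$; since both differences are monotone-decreasing in $F$ under inclusion, a pigeonhole argument shows that one of them -- say the $b^{-1}s$-difference -- is infinite for every $F_k$ beyond some $k_0$. Choose distinct $c_k\in\bigcap\{b^{-1}s:b\in F_k\}\setminus\bigcap\{b^{-1}s:b\in B\setminus F_k\}$, so that $b_i c_k=s$ for $i\le k$ and $b_{j(k)} c_k\ne s$ for some $j(k)>k$. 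Applying the infinite Ramsey theorem to the $2$-colouring of pairs $\{k,i\}$ (with $k<i$) by whether $b_i c_k=s$, and using the witnesses $j(k)$ together with the infinitude of the differences to rule out the ``always $=s$'' monochromatic outcome, one extracts an infinite $L\subseteq\mathbb{N}$ such that $b_i c_k=s\Leftrightarrow i\le k$ for all $i,k\in L$; this is an upper triangular constant submatrix and contradicts (3).

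\textbf{Where the difficulty sits.} I expect the hardest step to be the final Ramsey extraction in (4)$\Rightarrow$(3): ruling out the monochromatic ``always $=s$'' alternative requires a careful choice of the $c_k$ -- for instance by arranging that $B\setminus\{b\in B: b c_k=s\}$ be infinite -- which is itself extracted from the infinitude of the difference sets together with a further pigeonhole between ``cofinite'' and ``coinfinite'' truth-sets. The remaining steps are essentially translation between the four dialects, and the two-sided intersection structure of (4) matches neatly with the two triangular orientations (upper and lower) in (3).
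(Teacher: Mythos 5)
Your handling of (1)$\Leftrightarrow$(2)$\Leftrightarrow$(3) is essentially the paper's argument: norm-density of $\mathrm{span}\{\chi_s:s\in S\}$ in $c_0(S)$ reduces (1) to $\chi_s\in wap(S)$ for each $s$, and the extraction of a triangular constant submatrix from a disagreement of the two iterated limits is exactly the inductive selection of the $a_r,b_r$ that the paper carries out in its proof of (3)$\Rightarrow$(1). Where you genuinely diverge is (3)$\Leftrightarrow$(4): the paper does not prove this equivalence at all but simply cites Ruppert's Theorem 4 for (1)$\Leftrightarrow$(4). Of your two directions, ``a triangular constant submatrix exists $\Rightarrow$ (4) fails'' is correct and complete as written.

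The converse direction is where the real gap sits, and it cannot be closed for condition (4) as literally printed. Choosing $c_k\in\bigcap\{b^{-1}s:b\in F_k\}\setminus\bigcap\{b^{-1}s:b\in B\setminus F_k\}$ guarantees only \emph{one} witness $b_{j(k)}$ with $b_{j(k)}c_k\ne s$; to steer Ramsey's theorem away from the ``always $=s$'' colour you would need the sets $N(c_k)=\{b\in B:bc_k\ne s\}$ to have infinite intersections along the construction, and the hypothesis gives no control on this: each $N(c_k)$ may be a single point, and these points may be pairwise distinct. Concretely, let $S=\{b_i\}_{i\in\mathbb{N}}\cup\{c_j\}_{j\in\mathbb{N}}\cup\{s,0\}$ with $b_ic_j=s$ whenever $i\ne j$ and every other product equal to the zero element $0$; this is associative because every triple product is $0$. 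Here $b_i^{-1}s=\{c_j:j\ne i\}$, so for $B=\{b_i:i\in\mathbb{N}\}$ and \emph{every} finite $F\subseteq B$ the set $\bigcap\{b^{-1}s:b\in F\}\setminus\bigcap\{b^{-1}s:b\in B\setminus F\}$ is cofinite in $\{c_j\}$, hence infinite, so (4) fails. Yet no matrix $\{x_ny_m\}$ built from distinct sequences has a triangular constant submatrix (such a pattern would force two distinct indices $i(2)=j(1)=i(3)$ to coincide), and $\chi_s$ passes the double limit test, so (1), (2) and (3) all hold. Thus the implication ``not (4) $\Rightarrow$ not (3)'' that your sketch aims at is false for (4) as stated; the acknowledged ``hardest step'' is not merely hard but unprovable, and the only safe course here is the paper's, namely to quote Ruppert's criterion in its exact original formulation rather than to rederive it from the version of (4) given in the theorem.
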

\begin{proof}(1)$\Leftrightarrow$ (2). For all $s\in S$,  $\chi_s\in wap(S)$ if and only if $$\{\chi_s(x_ny_m):n<m\}\cap \{\chi_s(x_ny_m):n>m\}\not=\emptyset.$$

 (3)$\Rightarrow$ (1)Let $c_0(S)\not\subseteq wap(S)$ then there are sequences $\{x_n\}$, $\{y_m\}$ in $S$ with distinct elements such that for some $s\in S$, $$1=\lim_m\lim_n\chi_s(x_ny_m)\not=\lim_n\lim_m\chi_s(x_ny_m)=0.$$


 Since $\lim_n\lim_m\chi_s(x_ny_m)=0$,  for $1>\varepsilon>0$ there is a $N\in\mathbb{N}$ such that for all $n\geq N$, $\lim_m\chi_s(x_my_n)<\varepsilon$. This implies for all $n\geq N$, $\lim_m\chi_s(x_my_n)=0$. Then for $n\geq N$, $1>\varepsilon>0$ there is a $M_n\in\mathbb{N}$ such that for all $m\geq M_n$ we have $\chi_s(x_my_n)<\varepsilon$. So if we omit finitely many terms, for all $n\in \mathbb{N}$ there is $M_n\in\mathbb{N}$ such that for all $m\geq M_n$ we have $x_my_n\not=s$. As a similar argument, for all $m\in \mathbb{N}$ there is $N_m\in\mathbb{N}$ such that for all $n\geq N_m$, $x_my_n=s$.

  Let $a_1=x_1$, $b_1$ be the first $y_n$ such that $a_1y_n=s$. Suppose $a_m, b_n$ have been chosen for $1\leq m,n<r$, so that $a_nb_m=s$ if and only if $n\geq m$. Pick $a_r$  to be the first $x_m$ not belonging to the finite set $\cup_{1\leq n\leq r}\{x_m:x_my_n=s\}$. Then $a_rb_n\not=s$ for $n<r$. Pick $b_r$ to be the first $y_n$ belonging to the cofinite set  $\cap_{1\leq n\leq r}\{y_n:x_my_n=s\}$. Then $a_nb_m=s$ if and only if $n\geq m$. The sequences $(a_m)$, $(b_n)$ so constructed satisfy $a_mb_n=s$ if and only if $n\geq m$.
 That is, $\{a_nb_m\}$ is not of $W$-type  and this is a contradiction.

(1)$\Rightarrow$ (3). Let there are sequences $\{x_n\}$, $\{y_m\}$ in $S$ such that $\{x_ny_m\}$ is not a  $W$-type matrix, (say) $x_ny_m=s$ if and only if $m\leq n$. Then $$1=\lim_m\lim_n\chi_s(x_ny_m)\not=\lim_n\lim_m\chi_s(x_ny_m)=0.$$
 So $\chi_s\not\in wap(S)$. Thus $c_0(S)\not\subseteq wap(S)$.

(4)$\Leftrightarrow$ (1)This is Ruppert criterion for $\chi_s\in wap(S)$,  see \cite[Theorem 4]{Ruppert}.

\end{proof}
\begin{example}
\begin{enumerate}

  \item[(i)]
    Let $S$ be the interval $[\frac{1}{2},1]$ with multiplication $x.y=\max\{\frac{1}{2},xy\}$, where $xy$ is the ordinary multiplication on $\mathbb{R}$. Then for all $s\in S\setminus\{\frac{1}{2}\}$, $x\in S$, $x^{-1}s$ is finite. But $x^{-1}\frac{1}{2}=[\frac{1}{2},\frac{1}{2x}]$. Let $B=[\frac{1}{2},\frac{3}{4})$. Then for all finite subset $F$ of $B$,
    $$\bigcap_{x\in F}x^{-1}\frac{1}{2}\setminus \bigcap_{x\in B\setminus F}x^{-1}\frac{1}{2}=[\frac{2}{3},\frac{1}{2x_F}]$$
 where $x_F=\max F$. By \cite[Theorem 4]{Ruppert}   $\chi_{\frac{1}{2}}\not\in wap(S)$. So $c_0(S\setminus\{\frac{1}{2}\})\oplus \mathbb{C}\subsetneqq wap(S)$. It can be readily verified that $\epsilon:S\longrightarrow S^{wap}$ is   one to one, so $\ell_1(S)$ is a WAP-algebra but $c_0(S)\not\subseteq wap(S)$. This is a counter example for the converse of Corollary \ref{homomo}.

 \item[(ii)] Take $T=(\mathbb{N}\cup\{0\},.)$ with 0  as zero
 of $T$ and the multiplication defined by

$$n.m=\left\{
\begin{array}{lr}
                                                     n & \mbox{if}\quad n=m \\
                                                       0 &
                                                       \mbox{otherwise}.
                                                     \end{array}\right.
$$

Then $S=T\times T$ is a semigroup with coordinate wise multiplication.  Now let
$X=\{(k,0): k\in T\}$, $Y=\{(0,k):k\in T\}$ and $Z=X\cup Y$. We
use the Ruppert criterion \cite{Ruppert} to show that
$\chi_z\not\in wap(S)$, for each $z\in Z$. Let $B=\{(k,n): k,n\in
T\}$, then $(k,n)^{-1}(k,0)=\{(k,m):m\not= n\}=B\setminus
\{(k,n)\}$. Thus for all finite subsets $F$ of $B$,
\begin{eqnarray*}
\left(\cap\{(k,n)^{-1}(k,0): (k,n)\in F\}\right)&\setminus&(
\cap\{(k,n)^{-1}(k,0):
(k,n)\in B\setminus F\})\\
&=& \left(\cap\{(k,0)(k,n)^{-1}: (k,n)\in F\}\right ) \\
&\setminus &(\cap\{(k,n)^{-1}(k,0): (k,n)\in B\setminus F\}) \\
&=& (B\setminus F) \setminus F=B\setminus F
\end{eqnarray*}
and the last set is infinite. This means $\chi_{(k,0)}\not\in wap(S)$.  Similarly $\chi_{(0,k)}\not\in wap(S)$.  Let $f=\sum_{n=0}f(0,n)\chi_{(0,n)}+\sum_{m=1}^\infty f(m,0)\chi_{(m,0)}$ be in $ wap(S)$. For arbitrary fixed $n$ and sequence $\{(n,k)\}$ in $S$, we have $\lim_k f(n,k)=\lim_k\lim_lf(n,l.k)=\lim_l\lim_kf(n,l.k)=f(n,0)$ implies $f(n,0)=0$. Similarly $f(0,n)=0$ and $f(0,0)=0$. Thus $f=0$.   In fact $wap(S)\subseteq \ell^\infty(\mathbb{N}\times\mathbb{N})$. Since $wap(S)$ can not separate the points of $S$ so $\ell_1(S)$ is not a WAP-algebra.
  Let $\omega(n,m)=2^n3^m$ for $(n,m)\in S$. Then $\omega$ is a weight on $S$  such that $\omega\in wap(S,1/\omega)$, so the evaluation map $\epsilon:S\longrightarrow \tilde {X}$ is   one to one. This means $\ell_1(S,\omega)$ is a WAP-algebra but $\ell_1(S)$ is not a WAP-algebra. This is a counter example for the converse of Corollary \ref{weighted}.
  \item[(iii)]
    Let $S$ be the interval $[\frac{1}{2},1]$ with multiplication $x.y=\max\{\frac{1}{2},xy\}$, where $xy$ is the ordinary multiplication on $\mathbb{R}$. Then for all $s\in S\setminus\{\frac{1}{2}\}$, $x\in S$, $x^{-1}s$ is finite. But $x^{-1}\frac{1}{2}=[\frac{1}{2},\frac{1}{2x}]$. Let $B=[\frac{1}{2},\frac{3}{4})$. Then for all finite subset $F$ of $B$,
    $$\bigcap_{x\in F}x^{-1}\frac{1}{2}\setminus \bigcap_{x\in B\setminus F}x^{-1}\frac{1}{2}=[\frac{2}{3},\frac{1}{2x_F}]$$
 where $x_F=\max F$. By \cite[Theorem 4]{Ruppert}   $\chi_{\frac{1}{2}}\not\in wap(S)$. So $c_0(S\setminus\{\frac{1}{2}\})\oplus \mathbb{C}\subsetneqq wap(S)$. It can be readily verified that $\epsilon:S\longrightarrow S^{wap}$ is   one to one, so $\ell_1(S)$ is a WAP-algebra but $c_0(S)\not\subseteq wap(S)$. This is a counter example for the converse of Corollary \ref{homomo}.

 \item[(iv)] Take $T=(\mathbb{N}\cup\{0\},.)$ with 0  as zero
 of $T$ and the multiplication defined by

$$n.m=\left\{
\begin{array}{lr}
                                                     n & \mbox{if}\quad n=m \\
                                                       0 &
                                                       \mbox{otherwise}.
                                                     \end{array}\right.
$$

Then $S=T\times T$ is a semigroup with coordinate wise multiplication.  Now let
$X=\{(k,0): k\in T\}$, $Y=\{(0,k):k\in T\}$ and $Z=X\cup Y$. We
use the Ruppert criterion \cite{Ruppert} to show that
$\chi_z\not\in wap(S)$, for each $z\in Z$. Let $B=\{(k,n): k,n\in
T\}$, then $(k,n)^{-1}(k,0)=\{(k,m):m\not= n\}=B\setminus
\{(k,n)\}$. Thus for all finite subsets $F$ of $B$,
\begin{eqnarray*}
\left(\cap\{(k,n)^{-1}(k,0): (k,n)\in F\}\right)&\setminus&(
\cap\{(k,n)^{-1}(k,0):
(k,n)\in B\setminus F\})\\
&=& \left(\cap\{(k,0)(k,n)^{-1}: (k,n)\in F\}\right ) \\
&\setminus &(\cap\{(k,n)^{-1}(k,0): (k,n)\in B\setminus F\}) \\
&=& (B\setminus F) \setminus F=B\setminus F
\end{eqnarray*}
and the last set is infinite. This means $\chi_{(k,0)}\not\in wap(S)$.  Similarly $\chi_{(0,k)}\not\in wap(S)$.  Let $f=\sum_{n=0}f(0,n)\chi_{(0,n)}+\sum_{m=1}^\infty f(m,0)\chi_{(m,0)}$ be in $ wap(S)$. For arbitrary fixed $n$ and sequence $\{(n,k)\}$ in $S$, we have $\lim_k f(n,k)=\lim_k\lim_lf(n,l.k)=\lim_l\lim_kf(n,l.k)=f(n,0)$ implies $f(n,0)=0$. Similarly $f(0,n)=0$ and $f(0,0)=0$. Thus $f=0$.   In fact $wap(S)\subseteq \ell^\infty(\mathbb{N}\times\mathbb{N})$. Since $wap(S)$ can not separate the points of $S$ so $\ell_1(S)$ is not a WAP-algebra.
  Let $\omega(n,m)=2^n3^m$ for $(n,m)\in S$. Then $\omega$ is a weight on $S$  such that $\omega\in wap(S,1/\omega)$, so the evaluation map $\epsilon:S\longrightarrow \tilde {X}$ is   one to one. This means $\ell_1(S,\omega)$ is a WAP-algebra but $\ell_1(S)$ is not a WAP-algebra. This is a counter example for the converse of Corollary \ref{weighted}.
  
\end{enumerate}
\end{example}
\noindent{{\bf Acknowledgments.}}  This research was supported by
the Centers of Excellence for Mathematics at the University of
Isfahan.

\end{document}